\numberwithin{equation}{section}
\newtheorem{theorem}{Theorem}[section]
\newtheorem{lemma}{Lemma}[section]
\newtheorem{corollary}{Corollary}[section]
\newtheorem{remark}{Remark}[section]
\numberwithin{equation}{section} %¹«Ê½Ëæ½Ú±àºÅ
\title{
Gradient estimate and Liouville theorems for $p$-harmonic maps
\footnotetext[0]{2010 Mathematics Subject Classification. Primary 53C21, 53C43; Secondary 35B53, 58E20. }
\footnotetext[0]{ $^{*}$Supported by NSFC grant No. 11771087.}
\footnotetext[0]{ ${}^{\dag}$ Supported by NSFC grant No. 11831005; the Natural Science Foundation of Fujian Province, China (Grant No.2017J01398).}
\footnotetext[0]{{\em Key words and phrases.  $p$-harmonic maps, gradient estimate,
Liouville  theorems.}}
}
\author{
Yuxin Dong$^*$ and
  Hezi Lin$^\dag$ }
\date{}
\begin{document}
%\begin{CJK*}{GBK}{kai}

\maketitle

\begin{abstract}
In this paper, we first obtain an $L^q$ gradient estimate for $p$-harmonic maps, by assuming the target manifold supporting a certain function, whose gradient and Hessian satisfy some analysis conditions. From this $L^q$ gradient estimate, we get a corresponding Liouville type result for $p$-harmonic maps. Secondly, using these general results, we give various geometric applications to $p$-harmonic maps from complete manifolds with nonnegative Ricci curvature to manifolds with various upper bound on sectional curvature, under appropriate controlled images.
\end{abstract}

\section{Introduction}
It is well known that harmonic maps theory plays an important role in the geometric analysis and obtain vast research  on it. A natural generalization of harmonic maps  is $p$-harmonic maps  from
a variational point of view. $P$-harmonic maps arise in many different contexts in physics and mathematics,
and  it has been also extensively studied.

Recall that a $C^1$-map $u: (M^m, g)\rightarrow (N^n, h)$ between Riemannian manifolds is said to be $p$-harmonic, $p>1$, if it satisfies the non-linear system
\begin{equation}
\delta(|du|^{p-2} du)=0. \label{p-harmonic}
\end{equation}
The $p$-Laplace equation (\ref{p-harmonic}) arises as the Eucler-Lagrange equation associated with the $p$-energy functional $E_p(u)=\int_M |du|^p$. For more information on $p$-harmonic maps, one can consult the survey paper \cite{We}.

Fix $x \in  M$ with $|du|(x)\neq 0$. Choose a local orthonormal frame field $\{e_i\}_{i=1}^m$ at $x$. Denote by $f=|d u|$, then
\begin{align*}
\delta(f^{p-2}du) = \underset{i}{\sum}(\nabla_{e_i}(f^{p-2}du))(e_i)
=&\underset{i}{\sum}f^{p-2}(\nabla_{e_{i}}du)(e_{i})+ \underset{i}{\sum}e_{i}(f^{p-2})du(e_{i})\\
=& \underset{i}{\sum}f^{p-2}(\nabla_{e_{i}}du)(e_{i})+(p-2)f^{n-3}du(\nabla f).
\end{align*}
Hence, by (\ref{p-harmonic}) we have
\begin{equation}
\mbox{tr}_g (\nabla du) = -(p-2)f^{-1} du(\nabla f).  \label{Hessian-f}
\end{equation}

When $p=2$, $p$-harmonic maps are simply harmonic maps. In 1975, Yau \cite{Ya}
proved that on a complete Riemannian manifold with nonnegative Ricci curvature, any harmonic function bounded from one side must be a constant.   In \cite{SY}, Schoen and Yau showed that a harmonic map
of finite energy from a complete Riemannian manifold with non-negative
Ricci curvature to a complete manifold with non-positive sectional curvature is constant.  Later, Cheng \cite{Ch} obtained a gradient estimate of the
harmonic map to prove  the constancy of harmonic map if the target manifold
has nonpositive sectional curvature and the image satisfies a sublinear growth
condition. When the sectional curvature of the  target manifold has positive upper bound, Choi
\cite{Cho}  proved a Liouville theorem if the domain manifold has nonnegative
Ricci curvature and the image is contained in the geodesic ball.
When the target manifold $N$ is of sectional curvature bounded from above
by $-a^2$ for some $a>0$, it was proved by Y. Shen \cite{Sh} that if $u$ is a harmonic
map from a complete Riemannian manifold with non-negative
Ricci curvature into $N$ such that the image of $u$ lies in a horoball of $N$, then
$u$ must be a constant. There are also various Liouville type theorems for harmonic maps and their generalizations (e.g., \cite{DLY}, \cite{DW}, \cite{EL}, \cite{Hi}, \cite{Ji}  and the references therein).

For general $p>1$, compared with the theory for harmonic maps the study of $p$-harmonic maps is generally harder,
because the corresponding equation  is degenerate elliptic and the regularity results are far weaker. Recently, there has been renewed interest in $p$-harmonic maps. In particular, Moser \cite{M} established a nice connection between $p$-harmonic functions and the inverse mean curvature flow. In \cite{KN},  Kotschwar and Ni derived a local gradient estimate for $p$-harmonic functions by assuming the sectional curvature is bounded from below. Later, Wang and Zhang introduced the Moser iteration to improve Kotschwar-Ni's gradient estimate, by assuming only the Ricci curvature is bounded from below. Recently, Sung and  Wang \cite{SW} derive a sharp gradient estimate for positive $p$-harmonic functions on a complete manifold with Ricci curvature bounded below. For more Liouville type results for $p$-harmonic maps under  finite $q$-energy condition or pointwise energy condition,  one can consult \cite{CCW}, \cite{CS}, \cite{Ho}, \cite{Ma}, \cite{Na}, \cite{NT}, \cite{PRS}, \cite{PST}, \cite{PV}, \cite{Ta}, \cite{Zh} and the references therein.

In this paper, we tend to get  gradient estimate of general $p$-harmonic maps. However, the approach of Kotschwar-Ni's gradient estimate for positive $p$-harmonic functions is difficult to work in this case, since the linearization of the $p$-Laplace operator on maps involves vector bundle valued 1-forms which can not make inner product with real valued 1-forms in general.

To overcome this difficulty, we use the initial Bochner-Weitzenb\"{o}ck formula for $p$-harmonic maps to get the following $L^k$ gradient estimate.
\begin{theorem} \label{thm-main}
Suppose $(M^m, g)$ is a complete Riemannian manifold with $\mbox{Ric}^M \geq -K$, $K\geq 0$, and $(N^n, h)$ is a complete Riemannian manifold with $\mbox{Sec}^N \leq A$. Let $u : M \rightarrow N$ be a  $p$-harmonic map. Suppose there exists a positive $C^2$ function $\phi$ on $u(B_{x_0}(R))$  such that $A\underset{ u(B_{x_0}(R))}{\max}\phi < +\infty$, $|\nabla \phi| \leq c_0$ for some $c_0> 0$, and
\begin{equation*}
\frac{\alpha}{1+\phi}d \phi \otimes d\phi- \mbox{Hess} (\phi)\geq \beta h
\end{equation*}
for some positive constants $\alpha$, $\beta$, in the sense of quadratic forms.
Then there exists positive constants $\bar{k}_0(m, p, \sqrt{K}R)$, $C=C(m, p, k,\sqrt{K}R)$ and $C_1(m, p)$ such that
\begin{equation*}
\left|\frac{|du|^2}{C +\phi\circ u }\right|_{L^k(B_{x_0}(\frac{2R}{3}))}  \leq C_1 \frac{(1+\sqrt{K}R)^2}{R^2}(V(B_{x_0}(R))^{\frac{1}{k}}  \label{ineq-estimate}
\end{equation*}
holds for  $k \geq \bar{k}_0$.
\end{theorem}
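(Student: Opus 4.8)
The plan is to reduce the $p$-harmonic system to a single \emph{coercive} elliptic differential inequality for the quotient $G=|du|^2/(C+\phi\circ u)$ and then to extract the $L^k$ bound by a one-step integral (Moser-type) argument against a cutoff. Throughout write $f=|du|$ and $w=C+\phi\circ u$; all pointwise computations are first carried out on the open set $\{f>0\}$, where \eqref{Hessian-f} holds and $f$ is smooth, and the set $\{f=0\}$ is handled at the end by replacing $f$ with $f_\varepsilon=\sqrt{f^2+\varepsilon^2}$, deriving the estimates uniformly in $\varepsilon$, and letting $\varepsilon\to0$.

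First I would apply the Eells--Sampson Bochner--Weitzenb\"ock formula to $f^2$,
\[
\tfrac12\Delta f^2=|\nabla du|^2+\sum_i\langle du(\mathrm{Ric}^M e_i),du(e_i)\rangle-\sum_{i,j}\langle R^N(du\,e_i,du\,e_j)du\,e_j,du\,e_i\rangle+\langle\nabla\tau(u),du\rangle,
\]
and substitute the tension field $\tau(u)=-(p-2)f^{-1}du(\nabla f)$ from \eqref{Hessian-f}. Using $\mathrm{Ric}^M\ge-K$ and $\mathrm{Sec}^N\le A$ bounds the Ricci term below by $-Kf^2$ and the target-curvature term below by $-Af^4$; a Kato-type inequality $|\nabla du|^2\ge|\nabla f|^2$ together with Young's inequality is then used to dominate the $(p-2)$-weighted cross terms coming from $\langle\nabla\tau(u),du\rangle$. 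This produces a pointwise inequality of the schematic form $\tfrac12\Delta f^2\ge-Kf^2-Af^4+(\text{nonnegative gradient})-(\text{absorbable coupling})$.

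Second I would compute $\Delta(\phi\circ u)=\sum_i\mathrm{Hess}\,\phi(du\,e_i,du\,e_i)+d\phi(\tau(u))$, again insert \eqref{Hessian-f}, and invoke the quadratic-form hypothesis $\frac{\alpha}{1+\phi}d\phi\otimes d\phi-\mathrm{Hess}\,\phi\ge\beta h$ to obtain $\Delta w\le-\beta f^2+\frac{\alpha}{1+\phi}|\nabla w|^2+(\text{coupling})$. The role of the factor $\frac{1}{1+\phi}$ is precisely to make the gradient error $\frac{\alpha}{1+\phi}|\nabla w|^2$ small, since $|\nabla w|=|\nabla(\phi\circ u)|\le c_0 f$. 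Forming $\Delta G$ from the quotient rule and combining the two displays, the term $-\beta f^2$ yields a coercive $+\beta G^2$, while the target-curvature term contributes $-2Aw\,G^2$ and the Ricci term $-2KG$. The crux is to choose the additive constant $C$ and to apply Young's inequality so that every gradient-coupling and curvature error is dominated by the coercive quadratic term---using $|\nabla\phi|\le c_0$, the $\frac{1}{1+\phi}$ structure, and $A\max\phi<+\infty$---leaving
\[
\Delta G\ge\beta' G^2-c_2 K G-\langle X,\nabla G\rangle
\]
with $\beta'=\beta'(m,p,\ldots)>0$ and $X$ a drift vector field controlled by $|\nabla w|/w$.

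Finally, fix a cutoff $\eta$ with $\eta\equiv1$ on $B_{x_0}(2R/3)$, $\mathrm{supp}\,\eta\subset B_{x_0}(R)$ and $|\nabla\eta|\le c/R$. Multiplying the last inequality by $\eta^{2k}G^{k-2}$ ($k\ge2$), integrating over $B_{x_0}(R)$ and integrating by parts, the coercive term gives $\beta'\int\eta^{2k}G^k$, while the drift term, the $K$-term and the cutoff derivatives, after Young's inequality, are bounded by $R^{-2}\int_{B(R)}G^{k-1}$ and $K\int_{B(R)}G^{k-1}$. A further Young step $G^{k-1}\le\delta G^k+C_\delta$ with $\delta\sim\min(R^2,K^{-1})$ absorbs the residual $G^k$ and leaves $\int_{B(2R/3)}G^k\le C_1^k\,(1+\sqrt K R)^{2k}R^{-2k}\,V(B_{x_0}(R))$; taking $k$-th roots gives the stated estimate, with $\bar k_0$ chosen large enough that the absorption constants (depending on $m,p$) close up. The main obstacle is the second--third step: carrying out the computation of $\langle\nabla\tau(u),du\rangle$ for the degenerate $p$-tension field and verifying that, after the Young estimates, the net coefficient $\beta'$ of $G^2$ remains a fixed positive constant. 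This is exactly where the vector-bundle-valued nature of the linearization noted in the introduction forces one to work with $|du|^2$ rather than with a scalar $p$-Laplacian, and where the compatibility between $\beta$, $A\max\phi$ and $C$ must be monitored.
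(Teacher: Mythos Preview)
Your overall architecture---Bochner formula, combine with the convexity hypothesis on $\phi$ to manufacture a coercive $G^2$ term, then extract an $L^k$ bound against a cutoff---matches the paper's. The decisive difference is \emph{which} Bochner formula you use and \emph{where} you carry out the absorption.

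The paper never works with the Eells--Sampson identity for $|du|^2$. Instead it applies the Weitzenb\"ock formula to the bundle-valued $1$-form $\omega=|du|^{p-2}du$, for which $\delta\omega=0$ by $p$-harmonicity, obtaining Lemma~2.1:
\[
|du|^{p-1}\Delta|du|^{p-1}\ \ge\ -\langle \delta d(|du|^{p-2}du),\,|du|^{p-2}du\rangle\ -\ \tilde A\,|du|^{2p}\ -\ K\,|du|^{2(p-1)}.
\]
This choice eliminates the tension-field term altogether; what remains is the exterior-derivative piece, bounded pointwise by $|d(f^{p-2}du)|\le|p-2|\,f^{p-2}|\nabla f|$. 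All subsequent estimates are done \emph{only at the integral level}: the paper multiplies by $\psi=f_\varepsilon^{q}\eta^2/(\phi\circ u)^t$, integrates by parts, and absorbs the bad $(p-2)$-terms using the \emph{large} coefficient $q\min\{1,p-1\}$ produced by differentiating $f_\varepsilon^q$ (see the passage from (2.11)--(2.12) to (2.19)). The Saloff--Coste Sobolev inequality (2.21) is then applied once to reach the stated $L^k$ bound.

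Your proposal has a genuine gap at the step where you claim the pointwise inequality $\Delta G\ge \beta'G^2-c_2KG-\langle X,\nabla G\rangle$. With $\tau(u)=-(p-2)f^{-1}du(\nabla f)$, the term $\langle du,\nabla\tau(u)\rangle$ expands to three pieces, one of which is
\[
-(p-2)\,f^{-1}\sum_{i,j}(\mathrm{Hess}\,f)(e_i,e_j)\,\langle du(e_i),du(e_j)\rangle,
\]
a contraction of $\mathrm{Hess}\,f$ against the pullback metric. The standard Kato inequality $|\nabla du|^2\ge|\nabla f|^2$ controls only $|\nabla f|^2$, not this Hessian term, so no pointwise lower bound of the asserted form follows. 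If instead you integrate by parts to remove the derivative from $\tau$, you pick up $-\int\psi|\tau(u)|^2$, and since $|\tau(u)|^2\le(p-2)^2|\nabla f|^2$, the combination with Kato leaves a coefficient $1-(p-2)^2$ in front of $\int\psi|\nabla f|^2$, which is \emph{negative} for $p>3$ (and for $p<1$). The rescue has to come from the gradient terms produced by differentiating the test function itself---this is exactly what the paper does by inserting the factor $f_\varepsilon^{q}$ with $q$ large---but in your outline that large power enters only \emph{after} the pointwise inequality is already assumed. In short: drop the attempt at a pointwise coercive inequality for $G$, switch to the Weitzenb\"ock formula for $|du|^{p-2}du$, and carry the $(p-2)$-absorption at the integral level with the high-power test function, as the paper does.
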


If $A\leq 0$, then the condition $A\underset{ u(B_{x_0}(R))}{\max}\phi < +\infty$ is naturally holds for any positive function $\phi$. The novelty of our approach is that the geometric constraint of the target manifolds is now replaced by the assumption that there exists a positive function satisfying some analytic conditions.

Finally, we gives some applications of the above gradient estimate, and get Liouville type results for $p$-harmonic maps by assuming appropriate image constraints on the target manifolds according to the upper bound of the sectional curvature, which generalizes the corresponding results for  harmonic maps.
In order to get these Liouville properties for $p$-harmonic maps, it is natural to assume that the domain manifold has nonnegative Ricci curvature.

\section{Gradient estimate and the proof of Theorem 1.1}
Let us recall the following Bochner-Weitzenb\"{o}ck formula for $p$-harmonic maps, which can be deduced by applying formula (1.34) in \cite{EL} with the choice $\omega=|du|^{p-2}du$ (see also \cite{PRS}, \cite{CS}).
\begin{lemma}
Let $u: (M^m, g_M)\rightarrow (N^n, g_N)$ be any $p$-harmonic maps between Riemannian manifolds of dimensions $\mbox{dim} M =m$ and $\mbox{dim} N =n$, respectively. Suppose that
\begin{align*}
\mbox{Ricci}_x^M\geq -K(x) \ \ \mbox{on} \ M
\end{align*}
in the sense of quadratic forms,
\begin{align*}
\mbox{Sec}_{u(x)}^N \leq A(x) \ \ \mbox{on} \ N
\end{align*}
for some $A(x) $, then
\begin{align}
\nonumber |du|^{p-1}\triangle |du|^{p-1} \geq& - \langle \delta d (|du|^{p-2} du), |du|^{p-2} du\rangle\\
 &- \max\{0,A(x)\}|du|^{2p} -K(x)|du|^{2(p-1)}, \label{BW}
\end{align}
pointwise on the open set  $\{ x \in M: |du|(x)\neq 0\}$  and weakly on all of $M$.
\end{lemma}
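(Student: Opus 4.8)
The plan is to apply the Weitzenb\"ock formula (1.34) of \cite{EL} to the $u^{-1}TN$-valued $1$-form $\omega := |du|^{p-2}du$, and then to extract a pointwise inequality for $|\omega| = |du|^{p-1}$ via the Bochner identity together with Kato's inequality. First I would record that the $p$-harmonic equation \eqref{p-harmonic} is precisely $\delta\omega = 0$, so on the pullback bundle the Hodge--de Rham Laplacian reduces to $\Delta_H\omega = \delta d\omega + d\delta\omega = \delta d(|du|^{p-2}du)$. The Weitzenb\"ock formula then reads $\nabla^*\nabla\omega = \Delta_H\omega - \mathcal{R}(\omega)$, where $\nabla^*\nabla$ is the connection Laplacian on $T^*M\otimes u^{-1}TN$ and $\mathcal{R}$ is the associated Weitzenb\"ock curvature operator, which splits into a contribution of $\mathrm{Ric}^M$ (coming from the $T^*M$ factor) and a contribution of the pulled-back curvature $u^*R^N$ (coming from the $u^{-1}TN$ factor).

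Next I would combine this with the Bochner identity $\tfrac{1}{2}\Delta|\omega|^2 = |\nabla\omega|^2 - \langle\nabla^*\nabla\omega,\omega\rangle$ and the elementary decomposition $\tfrac{1}{2}\Delta|\omega|^2 = |\omega|\,\Delta|\omega| + \bigl|\nabla|\omega|\bigr|^2$, which together give
\begin{equation*}
|\omega|\,\Delta|\omega| = |\nabla\omega|^2 - \bigl|\nabla|\omega|\bigr|^2 - \langle\Delta_H\omega,\omega\rangle + \langle\mathcal{R}(\omega),\omega\rangle .
\end{equation*}
The Kato inequality $|\nabla\omega|^2\geq\bigl|\nabla|\omega|\bigr|^2$ lets me discard the first two terms, leaving $|\omega|\,\Delta|\omega| \geq -\langle\delta d(|du|^{p-2}du),|du|^{p-2}du\rangle + \langle\mathcal{R}(\omega),\omega\rangle$; since $|\omega| = |du|^{p-1}$, this is already of the desired shape once the curvature term is bounded below.

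The heart of the matter is then the curvature estimate. Writing $\{e_i\}$ for a local orthonormal frame and $X_i = du(e_i)$, the $M$-part of $\langle\mathcal{R}(\omega),\omega\rangle$ equals $|du|^{2(p-2)}\sum_i\langle du(\mathrm{Ric}^M e_i),du(e_i)\rangle$, which is $\geq -K|du|^{2(p-2)}|du|^2 = -K|du|^{2(p-1)}$ by $\mathrm{Ric}^M\geq -K$ together with the positivity of $(du)^*du$. The $N$-part is $-|du|^{2(p-2)}\sum_{i,j}\langle R^N(X_i,X_j)X_j,X_i\rangle$; here $\mathrm{Sec}^N\leq A$ yields $\langle R^N(X_i,X_j)X_j,X_i\rangle \leq A\bigl(|X_i|^2|X_j|^2-\langle X_i,X_j\rangle^2\bigr)$ for each pair, whence $\sum_{i,j}\langle R^N(X_i,X_j)X_j,X_i\rangle \leq \max\{0,A\}|du|^4$. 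The crucial bookkeeping point---where I expect the only genuine subtlety to lie---is the homogeneity: the pulled-back curvature is \emph{quadratic} in $du$, while the two remaining slots carry the weight $|du|^{p-2}$ of $\omega$, so the $N$-term is homogeneous of degree $2p$ in $|du|$ and is bounded below by $-\max\{0,A\}|du|^{2p}$ (and not by the power $|du|^{4(p-1)}$ that a naive count of four $\omega$-factors would suggest). Combining the two bounds gives the asserted pointwise inequality \eqref{BW} on $\{|du|\neq 0\}$.

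Finally, to upgrade the pointwise statement on $\{|du|\neq 0\}$ to a weak inequality on all of $M$, I would treat the zero set of $|du|$ by a standard regularization: replace $|\omega|$ by $(|\omega|^2+\varepsilon^2)^{1/2}$, for which the Bochner computation goes through with the Kato term producing no difficulty, test the resulting inequality against a nonnegative compactly supported function, and let $\varepsilon\to 0$, using that $\{|du|=0\}$ presents no obstruction in the distributional sense. This last step is routine but must be stated with care, since $|du|^{p-1}$ is in general only Lipschitz.
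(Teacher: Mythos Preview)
Your proposal is correct and follows precisely the approach the paper indicates: the paper does not supply a detailed proof of this lemma but states that it ``can be deduced by applying formula (1.34) in \cite{EL} with the choice $\omega=|du|^{p-2}du$ (see also \cite{PRS}, \cite{CS}),'' which is exactly your plan of combining the Weitzenb\"ock formula for $\omega$ with the Bochner identity, Kato's inequality, and the curvature bounds. Your homogeneity bookkeeping for the $N$-curvature term (degree $2p$ rather than $4(p-1)$, because the pullback curvature $R^{u^{-1}TN}(e_i,e_j)=R^N(du(e_i),du(e_j))$ contributes only two $du$-factors while the remaining two slots carry $\omega$) is the right observation, and your regularization for the weak inequality on the zero set is standard.
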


Let  $(M^m, g)$ be a complete Riemannian manifold with $\mbox{Ricci}^M \geq - K $ and $\Omega \subset M$ be an open set. Let $(N^n, h)$ be a complete Riemannian manifold with $\mbox{Sec}^N \leq  A $ and $u: M\rightarrow N$ be a $p$-harmonic map.
Suppose there exists  a positive $C^2$ function $\widetilde{\phi}$ on the image $u(\Omega)$  such that
$|\nabla \widetilde{\phi}|\leq 2c_0$
for $c_0\geq 0$, and
\begin{equation*}
\frac{\tilde{\alpha}}{1+\widetilde{\phi}}d \widetilde{\phi}\otimes d\widetilde{\phi}- Hess (\widetilde{\phi})\geq \tilde{\beta} h
\end{equation*}
for some positive constants $\tilde{\alpha}$ and $\tilde{\beta}$. Let $\phi=C^2+ \widetilde{\phi}$, where $C$ is a positive constant to be determined later. Then
\begin{equation}
\phi\geq C^2, \ \ \  \ \ \ |\nabla \phi|\leq 2c_0,  \label{gra-phi}
\end{equation}
 and
\begin{equation}
\frac{\alpha}{\phi}d \phi \otimes d\phi- Hess (\phi)\geq \beta h \label{Hessian-phi}
\end{equation}
 for some positive constants $\alpha$ and $\beta$.

Denote by  $\varphi=|d u|^{p-1}$.
From (\ref{BW}), we get
\begin{align}
 \varphi \triangle  \varphi + K \varphi^2 \geq - \langle \delta d (|du|^{p-2} du), |du|^{p-2} du\rangle
 - \tilde{A}|d u|^{2p}, \label{BW1}
\end{align}
where $\tilde{A}= \max\{0, A\}$. Using (\ref{BW1}), we have
\begin{align}
\nonumber \varphi \triangle  \left(\frac{\varphi}{(\phi \circ u)^s} \right) + K \frac{\varphi^2}{(\phi \circ u)^s} \geq& - \frac{1}{(\phi \circ u)^s}\langle \delta d (|du|^{p-2} du), |du|^{p-2} du\rangle\\
\nonumber & - 2s\frac{\varphi \langle \nabla \varphi, \nabla (\phi \circ u)\rangle}{(\phi \circ u)^{s+1}} + s(s+1)\frac{\varphi^2|\nabla (\phi \circ u)|^2}{(\phi \circ u)^{s+2}} \\
 &- s\frac{\varphi^2 \triangle (\phi \circ u)}{(\phi \circ u)^{s+1}}- \tilde{A}\frac{|d u|^{2p}}{(\phi \circ u)^s}, \label{BW2}
\end{align}
where $s$ is a positive constant which will be determined later.
The relation (\ref{BW2}) holds wherever $|d u|$ is strictly positive. Let $Z=\{x \in \Omega: |d u|=0\}$. Then for any non-negative function $\psi$ with compact support in
$\Omega\setminus Z$, we have
\begin{align}
\nonumber &-\int_\Omega \left\langle \nabla \left(\frac{\varphi}{(\phi \circ u)^s} \right), \nabla (\varphi\psi) \right\rangle + K \int_\Omega \frac{\psi \varphi^2}{(\phi \circ u)^s}\\
\nonumber \geq& -\int_\Omega \frac{\psi}{(\phi \circ u)^s}\langle \delta d(|du|^{p-2}du), |du|^{p-2} du\rangle- 2s\int_\Omega\frac{\psi \varphi \langle \nabla \varphi, \nabla (\phi \circ u)\rangle}{(\phi \circ u)^{s+1}}\\
  &+ s(s+1)\int_\Omega \frac{\psi\varphi^2|\nabla (\phi \circ u)|^2}{(\phi \circ u)^{s+2}}
 - s\int_\Omega \frac{\psi\varphi^2 \triangle (\phi \circ u)}{(\phi \circ u)^{s+1}}
 -\tilde{A} \int_\Omega \frac{\psi |d u|^{2p}}{(\phi \circ u)^s}. \label{BW-int}
\end{align}
Denote by $f=:|d u|$, then $\varphi=f^{p-1}$. Let  $\psi =\frac{ f_\epsilon^q\eta^2}{(\phi \circ u)^t}$ where $f_\epsilon =(f-\epsilon)^{+}$ with $\epsilon >0$, $\eta \in C_0^{\infty}(\Omega)$ is nonnegative, $q >1$, $t\geq 0$ is to be determined later. Substituting this $\psi$ into the relation (\ref{BW-int}) yields
\begin{align}
\nonumber &-\int_\Omega \left\langle \nabla \left(\frac{f^{p-1}}{(\phi \circ u)^s} \right),
\nabla \left(\frac{f^{p-1}f_\epsilon^q\eta^2}{(\phi \circ u)^t}\right) \right\rangle + K \int_\Omega \frac{f^{2p-2}f_\epsilon^q\eta^2 }{(\phi \circ u)^{s+t}}\\
\nonumber  \geq& - \int_\Omega \left\langle  d (f^{p-2} du),d \left( \frac{f^{p-2}f_\epsilon^q\eta^2}{(\phi \circ u)^{s+t}} du \right) \right\rangle
  -2s\int_\Omega\frac{f^{p-1}f_\epsilon^q\eta^2 \langle \nabla f^{p-1}, \nabla (\phi \circ u)\rangle}{(\phi \circ u)^{s+t+1}}\\
  &+ s(s+1)\int_\Omega \frac{f^{2p-2}f_\epsilon^q\eta^2|\nabla (\phi \circ u)|^2}{(\phi \circ u)^{s+t+2}}
  -s\int_\Omega \frac{f^{2p-2}f_\epsilon^q\eta^2 \triangle (\phi \circ u)}{(\phi \circ u)^{s+t+1}}
  - \tilde{A} \int_\Omega \frac{f^{2p}f_\epsilon^q\eta^2 }{(\phi \circ u)^{s+t}}. \label{BW-int2}
\end{align}
We shall take care of the integrals in (\ref{BW-int2}) separately.  Let us first consider the left hand side of (\ref{BW-int2}).
\begin{align}
\nonumber & \mbox{LHS} (\ref{BW-int2})\\
\nonumber& = -(p-1)^2\int_\Omega \frac{f^{2p-4}f_\epsilon^q\eta^2}{(\phi \circ u)^{s+t}} |\nabla f|^2
-q(p-1)\int_\Omega \frac{f^{2p-3}f_\epsilon^{q-1}\eta^2}{(\phi \circ u)^{s+t}} |\nabla f|^2\\
\nonumber &-2(p-1)\int_\Omega \frac{f^{2p-3}f_\epsilon^{q}\eta}{(\phi \circ u)^{s+t}} \langle \nabla f, \nabla \eta\rangle +(s+t)(p-1)\int_\Omega \frac{f^{2p-3}f_\epsilon^{q}\eta^2}{(\phi \circ u)^{s+t+1}} \langle \nabla (\phi \circ u), \nabla f \rangle\\
\nonumber & + sq\int_\Omega \frac{f^{2p-2}f_\epsilon^{q-1}\eta^2}{(\phi \circ u)^{s+t+1}} \langle \nabla (\phi \circ u), \nabla f \rangle + 2s\int_\Omega \frac{f^{2p-2}f_\epsilon^{q}\eta}{(\phi \circ u)^{s+t+1}} \langle \nabla (\phi \circ u), \nabla \eta \rangle\\
 & -st\int_\Omega \frac{f^{2p-2}f_\epsilon^{q}\eta^2}{(\phi \circ u)^{s+t+2}} | \nabla (\phi \circ u)|^2
+ K \int_\Omega \frac{f^{2p-2}f_\epsilon^q\eta^2 }{(\phi \circ u)^{s+t}}. \label{LHS}
\end{align}
%where we have used the fact that $$|\nabla(\phi \circ u)|  \leq  | d \rho^2| | du| = 2 \rho |d \rho| | du| = 2\rho f.$$
To estimate the right hand side of (\ref{BW-int2}),  from Lemma 13 of \cite{PRS} we have
$|df\wedge du| \leq |df||du|$. Thus,
\begin{equation}
|d(f^{p-2}du)| = |d(f^{p-2})\wedge du | \leq |d(f^{p-2})|| du | =|p-2| f^{p-2}|\nabla f|, \label{product1}
\end{equation}
and
\begin{align}
\nonumber \langle d(f^{p-2}du), d(f^q_\epsilon du)\rangle=&\langle d(f^{p-2})\wedge du, d(f^q_\epsilon)\wedge du)\rangle\\
\nonumber =& q(p-2) f^{p-3}f^{q-1}_\epsilon |d(|du|)\wedge du |^2\\
 \leq & q\max\{0, p-2\}f^{p-1}f^{q-1}_\epsilon|\nabla f|^2. \label{product2}
\end{align}
 Fix $x \in \Omega$ with $|du|(x)\neq 0$. Choose a local orthonormal frame field $\{e_i\}_{i=1}^m$ at $x$. By (\ref{Hessian-f}), (\ref{gra-phi}) and (\ref{Hessian-phi}) we have
\begin{align}
\nonumber \alpha\frac{|\nabla(\phi \circ u)|^2}{\phi \circ u}-\triangle (\phi \circ u)  = &
\frac{\alpha}{\phi \circ u}\underset{i}{\sum}d \phi\otimes d\phi(du(e_i), du(e_i))-\underset{i}{\sum}(\nabla d \phi)(du(e_i), du(e_i))\\
\nonumber & - d \phi (\mbox{tr}_g (\nabla d u))\\
\nonumber \geq& \beta |du|^2 +(p-2)d\phi (f^{-1} du(\nabla f))\\
\nonumber \geq & \beta f^2 -|p-2||\nabla \phi||\nabla f|\\
\geq& \beta f^2-\frac{2c_0|p-2|}{C}  \sqrt{\phi}|\nabla f|. \label{Hessian-a}
\end{align}
%Considering the nonnegativity of the third term at the right hand side of (\ref{BW-int2}), and
Using (\ref{product1}), (\ref{product2}), (\ref{Hessian-a}) and the Cauchy-Schwarz inequality, we calculate the right hand side of (\ref{BW-int2}),
\begin{align}
\nonumber  \mbox{RHS}& (\ref{BW-int2})\\
\nonumber =& -\int_\Omega  |d(f^{p-2}du)|^2 \frac{f_\epsilon^q\eta^2}{(\phi \circ u)^{s+t}}
    -\int_\Omega \frac{f^{p-2}\eta^2}{(\phi \circ u)^{s+t}} \left\langle d(f^{p-2}du), d\left( f_\epsilon^q du\right)  \right\rangle\\
\nonumber& -\int_\Omega f^{p-2}f_\epsilon^q \left\langle d(f^{p-2}du), d\left( \frac{\eta^2}{(\phi \circ u)^{s+t}} du\right)  \right\rangle\\
\nonumber  & - 2s\int_\Omega\frac{f^{p-1}f_\epsilon^q\eta^2 \langle \nabla f^{p-1}, \nabla (\phi \circ u)\rangle}{(\phi \circ u)^{s+t+1}} + s(s+1-\alpha)\int_\Omega \frac{f^{2p-2}f_\epsilon^q\eta^2|\nabla (\phi \circ u)|^2}{(\phi \circ u)^{s+t+2}}\\
\nonumber  & + s\int_\Omega \frac{f^{2p-2}f_\epsilon^q\eta^2 }{(\phi \circ u)^{s+t+1}}\left[ \alpha \frac{|\nabla(\phi \circ u)|^2}{\phi \circ u}-\triangle (\phi \circ u) \right]-\tilde{A} \int_\Omega \frac{f^{2p}f_\epsilon^q\eta^2 }{(\phi \circ u)^{s+t}}\\
\nonumber \geq& -(p-2)^2\int_\Omega   \frac{f^{2p-4}f_\epsilon^q\eta^2}{(\phi \circ u)^{s+t}}|\nabla f|^2-q\max\{0, p-2\}\int_\Omega \frac{f^{2p-3}f_\epsilon^{q-1} \eta^2}{(\phi \circ u)^{s+t}}|\nabla f|^2\\
 \nonumber  & -|p-2|\int_\Omega f^{2p-3}f_\epsilon^q |\nabla f| \left| d\left( \frac{\eta^2}{(\phi \circ u)^{s+t}} \right)\right| - 2s(p-1)\int_\Omega\frac{f^{2p-3}f_\epsilon^q\eta^2 \langle \nabla f, \nabla (\phi \circ u)\rangle}{(\phi \circ u)^{s+t+1}}\\
\nonumber  & + s(s+1-\alpha)\int_\Omega \frac{f^{2p-2}f_\epsilon^q\eta^2|\nabla (\phi \circ u)|^2}{(\phi \circ u)^{s+t+2}}+ s\beta \int_\Omega \frac{f^{2p}f_\epsilon^q\eta^2 }{(\phi \circ u)^{s+t+1}}\\
\nonumber  &-\frac{2sc_0 |p-2|}{C} \int_\Omega \frac{f^{2p-2}f_\epsilon^q \eta^2\sqrt{\phi}|\nabla f| }{(\phi \circ u)^{s+t+1}}-\tilde{A} \int_\Omega \frac{f^{2p}f_\epsilon^q\eta^2 }{(\phi \circ u)^{s+t}}\\
 \nonumber   \geq&  -\left\{(p-2)^2+ \frac{s^2 c_0 |p-2|}{C}\right\}\int_\Omega   \frac{f^{2p-4}f_\epsilon^q\eta^2}{(\phi \circ u)^{s+t}}|\nabla f|^2
    -q\max\{0, p-2\}\int_\Omega \frac{f^{2p-3}f_\epsilon^{q-1} \eta^2}{(\phi \circ u)^{s+t}}|\nabla f|^2 \\
\nonumber&   -2|p-2|\int_\Omega \frac{f^{2p-3}f_\epsilon^{q} \eta}{(\phi \circ u)^{s+t}}|\nabla f||\nabla \eta|-(s+t)|p-2|\int_\Omega \frac{f^{2p-3}f_\epsilon^{q} \eta^2}{(\phi \circ u)^{s+t+1}}|\nabla (\phi \circ u)||\nabla f|\\
  \nonumber  &- 2s(p-1)\int_\Omega\frac{f^{2p-3}f_\epsilon^q\eta^2 \langle \nabla f, \nabla (\phi \circ u)\rangle}{(\phi \circ u)^{s+t+1}} + s(s+1-\alpha)\int_\Omega \frac{f^{2p-2}f_\epsilon^q\eta^2|\nabla (\phi \circ u)|^2}{(\phi \circ u)^{s+t+2}}\\
  & + \left(s\beta-\frac{c_0|p-2|}{C}\right) \int_\Omega \frac{f^{2p}f_\epsilon^q\eta^2 }{(\phi \circ u)^{s+t+1}}-\tilde{A} \int_\Omega \frac{f^{2p}f_\epsilon^q\eta^2 }{(\phi \circ u)^{s+t}}. \label{RHS}
\end{align}
Combining with (\ref{LHS}) and (\ref{RHS}), denoting by $A(\phi)=\tilde{A}\underset{x \in \Omega}{\max}\phi \circ u$ and considering that $p-1 -\max\{0, p-2\}=\min\{1, p-1\} $, we have
\begin{align}
\nonumber  &\left(2p-3-\frac{s^2c_0 |p-2|}{C}\right)\int_\Omega   \frac{f^{2p-4}f_\epsilon^q\eta^2}{(\phi \circ u)^{s+t}}|\nabla f|^2
   + q\min\{1, p-1\}\int_\Omega \frac{f^{2p-3}f_\epsilon^{q-1} \eta^2}{(\phi \circ u)^{s+t}}|\nabla f|^2\\
\nonumber &+ s(s+t+1-\alpha)\int_\Omega \frac{f^{2p-2}f_\epsilon^q\eta^2|\nabla (\phi \circ u)|^2}{(\phi \circ u)^{s+t+2}}+ \left(s\beta-\frac{c_0 |p-2|}{C}-A(\phi)\right) \int_\Omega \frac{f^{2p}f_\epsilon^q\eta^2 }{(\phi \circ u)^{s+t+1}}\\
\nonumber & \leq sq\int_\Omega \frac{f^{2p-2}f_\epsilon^{q-1}\eta^2}{(\phi \circ u)^{s+t+1}} \langle \nabla (\phi \circ u), \nabla f \rangle
 +(3s+t)(p-1)\int_\Omega \frac{f^{2p-3}f_\epsilon^{q}\eta^2}{(\phi \circ u)^{s+t+1}} \langle \nabla (\phi \circ u), \nabla f \rangle\\
\nonumber &+ (s+t)|p-2|\int_\Omega \frac{f^{2p-3}f_\epsilon^{q} \eta^2}{(\phi \circ u)^{s+t+1}}|\nabla (\phi \circ u)||\nabla f|+ 2|p-2|\int_\Omega \frac{f^{2p-3}f_\epsilon^{q} \eta}{(\phi \circ u)^{s+t}}|\nabla f||\nabla \eta| \\
\nonumber &- 2(p-1)\int_\Omega \frac{f^{2p-3}f_\epsilon^{q}\eta}{(\phi \circ u)^{s+t}} \langle \nabla f, \nabla \eta\rangle+2s\int_\Omega \frac{f^{2p-2}f_\epsilon^{q}\eta}{(\phi \circ u)^{s+t+1}} \langle \nabla (\phi \circ u), \nabla \eta \rangle\\
 &+ K \int_\Omega \frac{f^{2p-2}f_\epsilon^q\eta^2 }{(\phi \circ u)^{s+t}}. \label{BW-int3}
\end{align}
Using both dominated and monotone convergence, taking the limit on both side of (\ref{BW-int3}) as $\epsilon \rightarrow 0$, we obtain
\begin{align}
\nonumber  &\left(2p-3+q \min\{1, p-1\}-\frac{s^2c_0 |p-2|}{C}\right)\int_\Omega   \frac{f^{2p+q-4}\eta^2}{(\phi \circ u)^{s+t}}|\nabla f|^2\\
\nonumber &+s(s+t+1-\alpha)\int_\Omega \frac{f^{2p+q-2}\eta^2|\nabla (\phi \circ u)|^2}{(\phi \circ u)^{s+t+2}}\\
\nonumber&+ \left(s\beta-\frac{c_0 |p-2|}{C}-A(\phi)\right) \int_\Omega \frac{f^{2p+q}\eta^2}{(\phi \circ u)^{s+t+1}}\\
\nonumber & \leq  \left[(s+t)(p-1) + s(2p+q-2) \right]\int_\Omega \frac{f^{2p+q-3}\eta^2}{(\phi \circ u)^{s+t+1}} \langle \nabla (\phi \circ u), \nabla f \rangle\\
\nonumber &  + (s+t)|p-2|\int_\Omega \frac{f^{2p+q-3} \eta^2}{(\phi \circ u)^{s+t+1}}|\nabla (\phi \circ u)||\nabla f|+2|p-2|\int_\Omega \frac{f^{2p+q-3} \eta}{(\phi \circ u)^{s+t}}|\nabla f||\nabla \eta| \\
\nonumber &-2(p-1)\int_\Omega \frac{f^{2p+q-3}\eta}{(\phi \circ u)^{s+t}} \langle \nabla f, \nabla \eta\rangle+ 2s\int_\Omega \frac{f^{2p+q-2}\eta}{(\phi \circ u)^{s+t+1}} \langle \nabla (\phi \circ u), \nabla \eta \rangle\\
\nonumber &+ K \int_\Omega \frac{f^{2p+q-2}\eta^2 }{(\phi \circ u)^{s+t}}\\
 \nonumber & \leq  \left[(s+t)(p-1+|p-2|) + s(2p+q-2) \right]\int_\Omega \frac{f^{2p+q-3} \eta^2}{(\phi \circ u)^{s+t+1}}|\nabla (\phi \circ u)||\nabla f|\\
\nonumber &+2(p-1+|p-2|)\int_\Omega \frac{f^{2p+q-3} \eta}{(\phi \circ u)^{s+t}}|\nabla f||\nabla\eta| \\
&+ 2s\int_\Omega \frac{f^{2p+q-2}\eta}{(\phi \circ u)^{s+t+1}} | \nabla (\phi \circ u)|| \nabla \eta|
 + K \int_\Omega \frac{f^{2p+q-2}\eta^2 }{(\phi \circ u)^{s+t}}. \label{BW-int4}
\end{align}
 For simplicity of notation and discussion, we only consider the case of $p\geq 2$. Choose $s$, $t$ such that $s+t = \frac{2p+q-2}{2}=p+\frac{q}{2}-1$. Denote by $k=p+\frac{q}{2}-1$. Then (\ref{BW-int4}) reduces to
\begin{align}
\nonumber  &\frac{(2k-1)C-s^2c_0 |p-2|}{k^2C}\int_\Omega  \left| \frac{\eta \nabla f^k}{(\phi \circ u)^{k/2}} \right|^2
+\frac{4s(k+1-\alpha)}{k^2}\int_\Omega \left| f^{k}\eta \nabla \left( \frac{1}{(\phi \circ u)^{k/2}} \right) \right|^2\\
\nonumber  & + \left(s\beta-\frac{c_0 |p-2|}{C}-A(\phi) \right) \int_\Omega \left(\frac{f^{2}}{\phi \circ u}\right)^{k+1}\eta^2\\
 \nonumber &\leq \frac{2(2s+p-1+|p-2|)}{k} \int_\Omega \left| \frac{\eta \nabla f^k}{(\phi \circ u)^{k/2}}\right| \left| \eta  f^{k}\nabla \left( \frac{1}{(\phi \circ u)^{k/2}} \right) \right|\\
 \nonumber & + \frac{2(p-1+|p-2|)}{k}\int_\Omega \left| \frac{\eta \nabla f^k}{(\phi \circ u)^{k/2}}\right| \left| \frac{f^{k}\nabla \eta}{(\phi \circ u)^{k/2}}\right|\\
 & + \frac{4s}{k} \int_\Omega \left|\frac{f^{k}\nabla \eta} {(\phi \circ u)^{k/2}}\right| \left|  \eta  f^{k}\nabla \left( \frac{1}{(\phi \circ u)^{k/2}} \right) \right| + K\int_\Omega \left(\frac{f^{2}}{\phi \circ u}\right)^k\eta^2.  \label{BW-int40}
\end{align}
Denote by $R_i$ (resp. $L_i$), $i=1,2,3, \cdots$, the i-th term at the right (resp. left) hand side.
Since $\phi \geq C^2$, we have
$$|\nabla(\phi \circ u)|^2  \leq  | \nabla \phi|^2 | du|^2 \leq \frac{4c_0^2}{C^2} |\phi| f^2.$$
Using the Cauchy inequality  to the right hand side  of (\ref{BW-int40}) yields
\begin{align}
\nonumber R_1 \leq&  \frac{3}{2k}\int_\Omega  \left| \frac{\eta \nabla f^k}{(\phi \circ u)^{k/2}} \right|^2 + \frac{k(2s+p-1+|p-2|)^2}{6}\int_\Omega \frac{f^{2k}\eta^2}{(\phi \circ u)^{k}} \left|\frac{\nabla (\phi \circ u)}{\phi \circ u}\right|^2\\
 \leq&  \frac{3}{2k}\int_\Omega  \left| \frac{\eta \nabla f^k}{(\phi \circ u)^{k/2}} \right|^2 + \frac{2kc_0^2(2s+p-1+|p-2|)^2}{3C^2}\int_\Omega \frac{f^{2k+2}\eta^2}{(\phi \circ u)^{k+1}}. \label{R1}
\end{align}
\begin{align}
 R_2 \leq  \frac{1}{4k}\int_\Omega  \left| \frac{\eta \nabla f^k}{(\phi \circ u)^{k/2}} \right|^2
+ \frac{4(p-1+|p-2|)^2}{k}\int_\Omega  \left| \frac{f^{k}\nabla \eta}{(\phi \circ u)^{k/2}}\right|^2.  \label{R2}
\end{align}
\begin{align}
 R_3 \leq \frac{2s}{k}  \int_\Omega \left|\frac{f^{k}\nabla \eta} {(\phi \circ u)^{k/2}}\right|^2  +\frac{2s}{k} \int_\Omega \left|  \eta  f^{k}\nabla \left( \frac{1}{(\phi \circ u)^{k/2}} \right) \right|^2.   \label{R3}
\end{align}
Substituting (\ref{R1}), (\ref{R2}) and (\ref{R3}) into (\ref{BW-int40}), we have
\begin{align}
\nonumber  &\frac{(k-4)C-4s^2c_0 |p-2|}{4k^2C}\int_\Omega  \left| \frac{\eta \nabla f^k}{(\phi \circ u)^{k/2}} \right|^2
+\frac{2s(k+2-2\alpha)}{k^2}\int_\Omega \left| f^{k}\eta \nabla \left(\frac{1}{(\phi \circ u)^{k/2}} \right) \right|^2 \\
\nonumber  & + \left(s\beta-\frac{c_0 |p-2|}{C}-A(\phi) \right) \int_\Omega \left(\frac{f^{2}}{\phi \circ u}\right)^{k+1}\eta^2\\
 \nonumber \leq& \frac{2kc_0^2(2s+p-1+|p-2|)^2}{3C^2}\int_\Omega \frac{f^{2k+2}\eta^2}{(\phi \circ u)^{k+1}}
+ \frac{2s+4(p-1+|p-2|)^2}{k}\int_\Omega \left|\frac{f^{k}\nabla \eta} {(\phi \circ u)^{k/2}}\right|^2\\
 &+ K\int_\Omega \left(\frac{f^{2}}{\phi \circ u}\right)^k\eta^2.  \label{BW-int5}
\end{align}
By choosing $C$ large enough such that
\begin{equation}
  \frac{2kc_0^2(2s+p-1+|p-2|)^2}{3C^2} \leq \frac{s\beta}{2},  \label{k-large}
\end{equation}
 the relation (\ref{BW-int5}) reduces to
\begin{align}
\nonumber  &\frac{(k-4)C-4s^2c_0 |p-2|}{4k^2C}\int_\Omega  \left| \frac{\eta \nabla f^k}{(\phi \circ u)^{k/2}} \right|^2
+\frac{2s(k+2-2\alpha)}{k^2}\int_\Omega \left| f^{k}\eta \nabla \left(\frac{1}{(\phi \circ u)^{k/2}} \right) \right|^2 \\
\nonumber  & +\left(\frac{s\beta}{2}-\frac{c_0 |p-2|}{C}-A(\phi)\right) \int_\Omega \left(\frac{f^{2}}{\phi \circ u}\right)^{k+1}\eta^2\\
 \leq& \frac{2s+4(p-1+|p-2|)^2}{k}\int_\Omega \left|\frac{f^{k}\nabla \eta} {(\phi \circ u)^{k/2}}\right|^2+ K\int_\Omega \left(\frac{f^{2}}{\phi \circ u}\right)^k\eta^2.  \label{BW-int6}
\end{align}
Assume that $A(\phi)=\tilde{A}\underset{x \in \Omega}{\max}\phi \circ u < \infty$. Choose $k$, $s$ large enough such that
\begin{equation}
(k-4)C-4s^2c_0 |p-2| >0, \  k+2-2\alpha>0 \ \mbox{and}\ \ \frac{s\beta}{2}-\frac{c_0 |p-2|}{C}-A(\phi) >0. \label{positive}
\end{equation}
Since,
\begin{align*}
  \int_\Omega   \left|\nabla( \frac{ f^k\eta}{(\phi \circ u)^{k/2}})\right|^2
  \leq &   3\int_\Omega  \left| \frac{\eta \nabla f^k}{(\phi \circ u)^{k/2}} \right|^2 + 3 \int_\Omega \left| f^{k}\eta \nabla \left(\frac{1}{(\phi \circ u)^{k/2}} \right) \right|^2\\
  &+ 3\int_\Omega \left|\frac{f^{k}\nabla \eta} {(\phi \circ u)^{k/2}}\right|^2,
\end{align*}
combining  this with (\ref{BW-int6}) gives
\begin{align}
\int_\Omega   |\nabla( \Phi^{\frac{k}{2}}\eta)|^2 + ka_1\left(\frac{s\beta}{2}-A(\phi)\right) \int_\Omega \Phi^{k+1}\eta^2
 \leq a_2\int_\Omega \Phi^k \left|\nabla\eta\right|^2+ kKa_3\int_\Omega \Phi^k\eta^2,  \label{BW-int7}
\end{align}
where we denote by $\Phi=\frac{f^{2}}{\phi \circ u} $ and $a_i$, $i=1,2,\cdots$ are the constants depending only on $m$ and $p$.

Let us recall the following Sobolev embedding theorem of Saloff-Coste.
For $m>2$, there exists $C_0$, depending only on $m$, such that for any ball $B \subset M$ of radius
$R$ and volume $V$, we have
\begin{equation}
\left(\int_B |f|^{\frac{2m}{m-2}}  \right)^{\frac{m-2}{m}} \leq e^{C_0(1+\sqrt{K}R)}V^{-\frac{2}{m}}
 R^2 \int_B \left( |\nabla f|^2 +  R^{-2} f^2 \right)   \label{sob-int}
\end{equation}
for any $f \in C^{\infty}_0$. For $m=2$, the above inequality holds with $m$ replaced by any fixed $m' >2$.

Let $B_{x_0}( R)$ be the closed geodesic  ball of radius $R$ and center $x_0$ in $M$. Let $\Omega = B_{x_0}( R)$. Applying (\ref{sob-int}) we have
\begin{equation}
\left(\int_\Omega \Phi^{\frac{mk}{m-2}} \eta^{\frac{2m}{m-2}} \right)^{\frac{m-2}{m}} \leq e^{C_0(1+\sqrt{K}R)}V^{-\frac{2}{m}}
 \left( R^2 \int_\Omega |\nabla ( \Phi^{\frac{k}{2}}\eta)|^2 + \int_\Omega \Phi^k \eta^2 \right). \label{sob-inq2}
\end{equation}
Let $k_0 =c_1(n,p)(1+\sqrt{K}R)$ with $c_1(n,p)$ large enough such that (\ref{positive}) holds for $k=k_0$. Combining (\ref{BW-int7}) and (\ref{sob-inq2}) gives
\begin{align}
\nonumber  &\left(\int_\Omega \Phi^{\frac{mk}{m-2}} \eta^{\frac{2m}{m-2}} \right)^{\frac{m-2}{m}}
   + a_4k(s\beta/2-A(\phi)) e^{k_0}V^{-\frac{2}{m}}R^2 \int_\Omega \Phi^{k+1}\eta^2\\
  \leq&    a_5  k_0^2 k e^{k_0}V^{-\frac{2}{m}} \int_\Omega \Phi^k\eta^2 + a_6 e^{k_0}V^{-\frac{2}{m}}R^2 \int_\Omega  \Phi^k|\nabla \eta|^2.  \label{main-int}
\end{align}

Now we can prove our main result.
\begin{theorem} \label{thm-pr}
Suppose $(M^m, g)$ is a complete Riemannian manifold with $\mbox{Ric}^M \geq -K$, $K\geq 0$, and $(N^n, h)$ is a complete Riemannian manifold with $\mbox{Sec}^N \leq A$. Let $u : M \rightarrow N$ be a  $p$-harmonic map. Suppose there exists a positive $C^2$ function $\widetilde{\phi}$ on $u(B_{x_0}(R))$ such that $ A\underset{u(B_{x_0}(R))}{\max}\widetilde{\phi} < \infty$, $|\nabla \widetilde{\phi}| \leq 2c_0$ for some $c_0\geq 0$, and
\begin{equation}
\frac{\alpha}{1+\widetilde{\phi}}d \widetilde{\phi} \otimes d\widetilde{\phi}- \mbox{Hess} (\widetilde{\phi})\geq \beta h  \label{hess-ine}
\end{equation}
for some positive constants $\alpha$, $\beta$. Then there exists positive constants $\bar{k}_0(m, p, \sqrt{K}R)$, $C=C(m, p, k,\sqrt{K}R)$ and $C_1(m, p)$ such that
\begin{equation}
\left|\frac{|du|^2}{C +\widetilde{\phi}\circ u }\right|_{L^k(B(\frac{2R}{3}))}  \leq C_1(m, p) \frac{(1+\sqrt{K}R)^2}{R^2}(V(B(R))^{\frac{1}{k}}  \label{ineq-estimate}
\end{equation}
holds for  $k \geq \bar{k}_0$.
\end{theorem}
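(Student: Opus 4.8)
The plan is to run a Moser iteration on the key inequality (\ref{main-int}), exploiting the positive absorption term both to kill the lower-order contribution and to manufacture the correct geometric scale $(1+\sqrt{K}R)^2/R^2$. As a preliminary cleanup, fix a power and observe that in (\ref{main-int}) the right-hand term $a_5 k_0^2 k e^{k_0}V^{-2/m}\int_\Omega\Phi^k\eta^2$ and the left-hand absorption term $a_4 k(s\beta/2-A(\phi))e^{k_0}V^{-2/m}R^2\int_\Omega\Phi^{k+1}\eta^2$ carry the same weight $\eta^2$. Hence a pointwise Young inequality buries the former into half of the latter, at the cost of an additive term $C(k)\int_\Omega\eta^2\le C(k)V$ with $C(k)$ of the form $a_5 k_0^2 k e^{k_0}V^{-2/m}\big(\tfrac{a_5 k_0^2}{a_4(s\beta/2-A(\phi))R^2}\big)^{k}$. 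This is exactly where the scale $k_0^2/R^2$ is born, and since $k_0=c_1(n,p)(1+\sqrt{K}R)$ it will eventually become $(1+\sqrt{K}R)^2/R^2$. After this absorption, writing $\chi=\tfrac{m}{m-2}>1$, the inequality (\ref{main-int}) reduces to the Moser-type inequality
\[
\Big(\int_\Omega \Phi^{k\chi}\eta^{2\chi}\Big)^{1/\chi}\le C(k)\,V + a_6 e^{k_0}V^{-2/m}R^2\int_\Omega \Phi^k|\nabla\eta|^2 .
\]

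Next I would iterate, applying this at each power $k_j=k\chi^{j}\uparrow\infty$ (i.e. re-running the derivation of (\ref{main-int}) with the corresponding choice of $q$). Choosing radii $r_j=\tfrac{2R}{3}+\tfrac{R}{3}2^{-j}\downarrow\tfrac{2R}{3}$ and cutoffs $\eta_j$ equal to $1$ on $B(r_{j+1})$, supported in $B(r_j)$ with $|\nabla\eta_j|\le C2^{j}/R$, the level-$k_j$ inequality upgrades an $L^{k_j}$ bound of $\Phi$ on $B(r_j)$ to an $L^{k_{j+1}}$ bound on $B(r_{j+1})$. The decisive point is the bookkeeping of constants in the resulting product: the factors $e^{k_0}V^{-2/m}$ are raised to the power $\sum_j 1/k_j=\tfrac{m}{2k}$, so they collapse to $e^{k_0 m/(2k)}V^{-1/k}$. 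The exponential factor stays bounded precisely because $k\ge\bar k_0\gtrsim k_0$ (this is the role of the hypothesis $k\ge\bar k_0$), while the remaining products $\prod_j(a_5 k_0^2 k_j+Ca_6 4^{j})^{1/k_j}$ converge since $\sum_j j\chi^{-j}$ and $\sum_j\chi^{-j}$ are finite, contributing only a factor $\le C(m)$. The outcome is a mean-value inequality $\sup_{B(2R/3)}\Phi\le C(m,p)\big(\tfrac1V\int_{B(3R/4)}\Phi^{k}\big)^{1/k}$.

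It then remains to control the base integral $\int_{B(3R/4)}\Phi^{k}$, and this is again supplied by the absorption term: taking $\eta$ supported in $B(R)$ and equal to $1$ on $B(3R/4)$, the cleaned-up inequality reads $\tfrac{P}{2}\int_{B(3R/4)}\Phi^{k}\le C(k-1)V+\tfrac{CW}{R^2}\int_{B(R)}\Phi^{k-1}$, and a filling-hole iteration over a finite family of radii (using Young's inequality $\Phi^{k-1}\le\epsilon\Phi^{k}+C_\epsilon$ to recycle the last integral) yields $\int_{B(3R/4)}\Phi^{k}\le\big(Ck_0^2/R^2\big)^{k}V$. Substituting this into the mean-value inequality cancels the volume and produces the $k$-independent estimate $\sup_{B(2R/3)}\Phi\le C_1(m,p)\,(1+\sqrt{K}R)^2/R^2$. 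The assertion follows immediately from the trivial bound $\|\Phi\|_{L^k(B(2R/3))}\le\big(\sup_{B(2R/3)}\Phi\big)V(B(2R/3))^{1/k}\le C_1(m,p)\tfrac{(1+\sqrt{K}R)^2}{R^2}V(B(R))^{1/k}$, recalling that $\Phi=\frac{|du|^2}{C+\widetilde{\phi}\circ u}$.

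The hard part is not any single algebraic manipulation but the uniform control of the iteration constants. The zeroth-order term is removed cleanly because it shares the weight $\eta^2$ with the absorption term, but the cutoff-gradient term $\int\Phi^k|\nabla\eta|^2$ admits no pointwise absorption and is exactly what forces an infinite iteration; at the same time one must show that the a priori enormous Saloff-Coste factor $e^{k_0}$ is defeated by the exponent sum $\sum 1/k_j=m/(2k)$, which succeeds only when $k$ is comparable to or larger than $k_0\sim 1+\sqrt{K}R$. Finally, one must check that the three positivity conditions in (\ref{positive}) persist at every level $k_j$, so that the absorption coefficient $s\beta/2-A(\phi)$ remains positive and of size $\sim k_j$ throughout; verifying this persistence is the bookkeeping that makes the whole scheme close.
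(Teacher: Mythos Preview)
Your plan has a genuine gap: the Moser iteration you propose cannot be run, because the function $\Phi=\frac{f^{2}}{\phi\circ u}$ is \emph{not} the same object at different levels $k_j$. Recall that $\phi=C^{2}+\widetilde{\phi}$, and the constant $C$ is forced by condition (\ref{k-large}),
\[
\frac{2k\,c_0^{2}\,(2s+p-1+|p-2|)^{2}}{3C^{2}}\;\le\;\frac{s\beta}{2},
\]
so $C^{2}\gtrsim k$. If you fix $C$ at the initial level and then ``re-run the derivation of (\ref{main-int})'' with $q$ chosen so that $k=k_j=k\chi^{j}$, inequality (\ref{k-large}) fails for large $j$, the passage (\ref{BW-int5})$\to$(\ref{BW-int6}) collapses, and you never reach (\ref{main-int}) at level $k_j$. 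If instead you let $C=C_j$ grow with $k_j$, then $\Phi_j=f^{2}/(C_j^{2}+\widetilde{\phi}\circ u)$ changes at each step and the chain $\|\Phi\|_{L^{k_{j+1}}}\le\cdots\|\Phi\|_{L^{k_j}}$ does not link up. The paper flags exactly this obstruction in Remark~2.1: ``the constant $C$ in (\ref{ineq-estimate}) depends on $k$, so we can not further proceed the Moser iteration.'' Your claimed pointwise bound $\sup_{B(2R/3)}\Phi\le C_1(m,p)(1+\sqrt{K}R)^{2}/R^{2}$ is therefore not established; and your side remark that the absorption coefficient $s\beta/2-A(\phi)$ is ``of size $\sim k_j$'' is also off, since letting $s\sim k_j$ only makes (\ref{k-large}) and the first condition in (\ref{positive}) harder to satisfy.

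The paper's actual argument avoids iteration entirely and works at a \emph{single} exponent $k=k_0=c_1(m,p)(1+\sqrt{K}R)$. The zeroth-order term $a_5 k_0^{3}e^{k_0}V^{-2/m}\int\Phi^{k_0}\eta^{2}$ is absorbed by splitting $\Omega$ according to the level set $\{\Phi>\tfrac{2a_5 k_0^{2}}{a_4(s\beta/2-A(\widetilde{\phi}))R^{2}}\}$: on the large set half of the $\Phi^{k_0+1}$ term dominates, on the small set one picks up the additive contribution $a_7^{k_0}k_0^{3}(k_0/R)^{2k_0}e^{k_0}V^{1-2/m}$. The gradient term $\int\Phi^{k_0}|\nabla\eta|^{2}$, which you correctly identify as the one resisting pointwise absorption, is handled by taking a high-power cutoff $\eta=\eta_1^{\,k_0+1}$ so that $R^{2}|\nabla\eta|^{2}\le a_8 k_0^{2}\,\eta^{2k_0/(k_0+1)}$; a single application of Young's inequality with exponents $\tfrac{k_0+1}{k_0}$ and $k_0+1$ then feeds this back into the $\Phi^{k_0+1}\eta^{2}$ term plus another additive piece of the same shape. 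One Sobolev step (no iteration) gives the $L^{k_1}$ bound with $k_1=\tfrac{m}{m-2}k_0$, and taking the $1/k_0$-th root produces (\ref{ineq-estimate}). The moral is that the extra absorption term coming from (\ref{hess-ine}) is strong enough to close the estimate in one pass, and this is what replaces the iteration.
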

\begin{proof}
Let $k=k_0$ in (\ref{main-int}). To compare $L_2$ and $R_1$ of (\ref{main-int}), we observe that if $\Phi > \frac{2a_5 k_0^2}{a_4(s\beta/2-A(\widetilde{\phi})) R^2}$, then
\begin{equation*}
a_5 k_0^3 \Phi^k < \frac{1}{2}a_4 k_0 (s\beta/2-A(\widetilde{\phi})) R^2 \Phi^{k+1}.
\end{equation*}
Hence we decompose $\Omega$ as
\begin{align*}
\Omega= \left\{x\in \Omega | \Phi(x) > \frac{2a_5 k_0^2}{a_4(s\beta/2-A(\widetilde{\phi})) R^2}\right \}\bigcup \left\{x\in \Omega | \Phi(x) \leq \frac{2a_5 k_0^2}{a_4(s\beta/2-A(\widetilde{\phi})) R^2} \right\}.
\end{align*}
With this decomposition we have
\begin{equation*}
R_1 \leq a_7^{k_0} k^3_0  \left(\frac{k_0}{R} \right)^{2k_0} e^{k_0}V^{1-\frac{2}{m}} + \frac{1}{2}L_2.
\end{equation*}
Therefore (\ref{main-int}) with $k=k_0$ can be written as
\begin{align}
\nonumber  &\left(\int_\Omega \Phi^{\frac{mk_0}{m-2}} \eta^{\frac{2m}{m-2}} \right)^{\frac{m-2}{m}}
   + \frac{1}{2}a_4(s\beta/2-A(\widetilde{\phi}))k_0 e^{k_0}V^{-\frac{2}{m}}R^2 \int_\Omega \Phi^{k_0+1}\eta^2\\
  \leq&    a_7^{k_0} k^3_0  \left(\frac{k_0}{R} \right)^{2k_0} e^{k_0}V^{1-\frac{2}{m}}  + a_6 e^{k_0}V^{-\frac{2}{m}}R^2 \int_\Omega  \Phi^{k_0}|\nabla \eta|^2.   \label{main-int2}
\end{align}
Now we choose $\eta$ to make $R_2$ of (\ref{main-int2}) dominated by the LHS. Choose  $\eta_1 \in C^{\infty}_0(M)$ satisfying
\begin{equation*}
0 \leq \eta_1 \leq 1, \ \ \eta_1\equiv 1 \ \mbox{in}\ B_{x_0}(2R/3), \ \ |\nabla \eta_1|\leq \frac{C(m)}{R}.
\end{equation*}
Let $\eta = \eta_1^{k_0 +1 }$. Then
\begin{equation*}
R^2 |\nabla \eta|^2 \leq  a_8 k_0^2 \eta^{\frac{2k_0}{k_0+1 }}.
\end{equation*}
Hence applying the Young's inequality shows that
\begin{align}
\nonumber a_6R^2 \int_\Omega  \Phi^{k_0}|\nabla \eta|^2  \leq& a_9 k_0^2  \int_\Omega  \Phi^{k_0}\eta^{\frac{2k_0}{k_0+1 }}\\
\nonumber\leq & a_9 k_0^2\left(\int_\Omega \Phi^{k_0+1}\eta^2 \right)^{\frac{k_0}{k_0 +1 }} V^{\frac{1}{k_0 +1 }}\\
 \leq & \frac{1}{2}a_4 \left(s\beta/2-A(\widetilde{\phi})\right)k_0R^2 \int_\Omega \Phi^{k_0 +1}\eta^2 + a_{10}^{k_0} \frac{k_0^{2k_0 +1}}{R^{2k_0}}V. \label{ineq2}
\end{align}
Substituting (\ref{ineq2}) into (\ref{main-int2})  yields
\begin{align}
  \left(\int_{B_{x_0}(2R/3)} \Phi^{\frac{mk_0}{m-2}}\right)^{\frac{m-2}{m}}
  \leq    a_{11}^{k_0} k^3_0 e^{k_0} \left(\frac{k_0}{R} \right)^{2k_0} V^{1-\frac{2}{m}}.  \label{main-int3}
\end{align}
Setting $k_1=\frac{mk_0}{m-2} $, taking the $\frac{1}{k_0}$-root on both sides of (\ref{main-int3})
we have
\begin{equation*}
\|\Phi\|_{L^{k_1}(B_{x_0}(2R/3))} \leq a_{11}ek_0^{\frac{3}{k_0}} \frac{k_0^2}{R^2}V^{\frac{1}{k_1}}
\leq a_{11}e^{1+\frac{3}{e}}\frac{k_0^2}{R^2}V^{\frac{1}{k_1}},
\end{equation*}
which completes the proof of Theorem \ref{thm-pr}.
\end{proof}

\begin{remark}
Under the assumption $\underset{u(B_{x_0}(R))}{\max}\widetilde{\phi} < \infty$, the condition (\ref{hess-ine}) is equivalent to
$\mbox{Hess} (\log(\widetilde{\phi}+1)) \leq -\alpha h$ for some  $\alpha>0$.
To eliminate the last term in (\ref{R1}), we introduce the constant $C$. But according to (\ref{k-large}), the constant $C$ in (\ref{ineq-estimate}) depends on $k$, so we can not further proceed the Moser iteration to the inequality (\ref{main-int}) to get local gradient estimate.
\end{remark}

\section{Liouville type theorems}
Based on the above estimate in Theorem \ref{thm-pr}, by assuming the Ricci curvature of the domain manifold is nonnegative and using the volume comparison theorem, we have the following Liouville type theorem.
\begin{theorem} \label{Liouville-thm}
Suppose $(M^m, g)$ is a complete Riemannian manifold with $\mbox{Ric}^M \geq 0$ and $(N^n, h)$ is a complete Riemannian manifold with $\mbox{Sec}^N \leq A$. Let $u : M \rightarrow N$ be a  $p$-harmonic map. Suppose there exists a positive $C^2$ function $\phi$ on $u(M)$ such that $A\underset{ u(M)}{\max}\phi  < \infty$, $|\nabla \phi| \leq c_0$ for some $c_0> 0$, and $\frac{\alpha}{1+\phi}d \phi \otimes d\phi- \mbox{Hess} (\phi)\geq \beta h$
for some positive constants $\alpha$ and $\beta$. Then $u$ is a constant map.
\end{theorem}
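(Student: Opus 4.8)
The plan is to feed the gradient estimate of Theorem~\ref{thm-pr} into a volume–comparison argument and let the radius tend to infinity. Since $\mbox{Ric}^M\geq 0$ we apply Theorem~\ref{thm-pr} with $K=0$ and $\widetilde\phi=\phi$ (the bound $|\nabla\phi|\leq c_0$ certainly gives $|\nabla\phi|\leq 2c_0$, and the remaining hypotheses are identical, $\phi$ being defined on all of $u(M)\supset u(B_{x_0}(R))$). The decisive consequence of $K=0$ is that the factor $(1+\sqrt{K}R)^2$ collapses to $1$ and, more importantly, the threshold $\bar k_0(m,p,\sqrt{K}R)$ and the constant $C(m,p,k,\sqrt{K}R)$ lose their $\sqrt{K}R$ dependence. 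Moreover, because $A\max_{u(M)}\phi<\infty$, the quantity $A(\phi)=\tilde A\max_{u(B_{x_0}(R))}\phi\circ u$ entering conditions (\ref{k-large})–(\ref{positive}) is bounded, uniformly in $R$, by $A\max_{u(M)}\phi$; hence $\bar k_0$ and $C$ may be fixed once and for all, independently of $R$. Writing $\Phi=|du|^2/(C+\phi\circ u)$, the estimate on $B_{x_0}(R)$ then reads
\begin{equation*}
\Big(\int_{B_{x_0}(2R/3)}\Phi^{\,k}\Big)^{1/k}\leq \frac{C_1(m,p)}{R^{2}}\,\big(V(B_{x_0}(R))\big)^{1/k}
\end{equation*}
for every fixed $k\geq\bar k_0(m,p)$ and every $R>0$.

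Next I would control the volume growth. Since $\mbox{Ric}^M\geq 0$, the Bishop–Gromov comparison theorem gives $V(B_{x_0}(R))\leq \omega_m R^{m}$, where $\omega_m$ is the volume of the Euclidean unit $m$-ball. Substituting this into the displayed inequality bounds the right-hand side by $C_1(m,p)\,\omega_m^{1/k}\,R^{\,m/k-2}$.

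Finally I would fix $k$ large enough that both $k\geq\bar k_0(m,p)$ and $k>m/2$ hold; this is possible precisely because $\bar k_0$ is now independent of $R$ and we are free to enlarge $k$. For such a $k$ the exponent $m/k-2$ is negative, so the right-hand side tends to $0$ as $R\to\infty$. On the left, the balls $B_{x_0}(2R/3)$ exhaust $M$, and since $\Phi^{\,k}\geq 0$ the monotone convergence theorem yields $\int_{B_{x_0}(2R/3)}\Phi^{\,k}\to\int_M\Phi^{\,k}$. Hence $\int_M\Phi^{\,k}=0$. As $\phi$ is positive and $C>0$, the denominator $C+\phi\circ u$ never vanishes and $\Phi$ is continuous (recall $u\in C^1$), so $\Phi\equiv 0$, forcing $|du|\equiv 0$; therefore $u$ is constant.

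The argument is short once Theorem~\ref{thm-pr} is in hand, and the only point that genuinely requires care is the $R$-independence of $\bar k_0$ and $C$. This rests on two features of the hypotheses: $\mbox{Ric}^M\geq 0$, which sets $K=0$ and removes the growing factor $\sqrt{K}R$; and the \emph{global} bound $A\max_{u(M)}\phi<\infty$, which keeps $A(\phi)$ under control as the ball swells. Were either to fail, the admissible range of $k$ and the required size of $C$ would drift with $R$, and one could not hold $k$ fixed while sending $R\to\infty$, so the limiting step would collapse. I expect reading off this uniformity from the proof of Theorem~\ref{thm-pr} to be the main, though mild, obstacle.
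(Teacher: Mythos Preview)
Your proposal is correct and follows essentially the same route as the paper: apply Theorem~\ref{thm-pr} with $K=0$, use Bishop--Gromov to bound $V(B_{x_0}(R))\leq C(m)R^m$, pick $k>m/2$ (and $k\geq\bar k_0$), and send $R\to\infty$ so that $R^{m/k-2}\to 0$, forcing $\Phi\equiv 0$ and hence $du\equiv 0$. Your version is in fact more careful than the paper's own proof in spelling out why $\bar k_0$ and $C$ can be held fixed as $R$ varies (via $K=0$ and the global bound $A\max_{u(M)}\phi<\infty$) and in invoking monotone convergence for the left-hand side; the paper simply writes the limit and leaves these points implicit.
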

\begin{proof}
Since $\mbox{Ric}^M \geq 0$, by the volume comparison theorem we have
\begin{equation*}
V(B_{x_0}(R)) \leq C(m)R^m.
\end{equation*}
Letting $k>\frac{m}{2}$ in (\ref{ineq-estimate}), we obtain
\begin{equation*}
\left|\frac{|du|^2}{C +\phi\circ u }\right|_{L^k(M)}  \leq \underset{R\rightarrow \infty}{\lim}C_1(m,p) \frac{(V(B_{x_0}(R)))^{\frac{1}{k}}}{R^2}\leq \underset{R\rightarrow \infty}{\lim}C_2(m,p)\frac{R^{\frac{m}{k}}}{R^2}=0,
\end{equation*}
which implies that $du=0$, that is, $u$ is a constant map.
\end{proof}

%Using the $L^k$ gradient estimate of Theorem \ref{thm-pr},%
 By choosing various concrete $C^2$ functions $\phi$ satisfying (\ref{gra-phi}) and (\ref{Hessian-phi}) on the target manifold $N$ according to the upper bound of the sectional curvature, we can further get some Liouville type theorems for $p$-harmonic maps
under appropriate constraints on the images of the maps.

If the target manifold has nonpositive sectional curvature, we have the following Liouville theorem for $p$-harmonic maps which generalizes the result in \cite{Ch}.
\begin{corollary}
Suppose $(M^m, g)$ is a complete Riemannian manifold with nonnegative Ricci curvature and $(N^n, h)$ is a complete Riemannian manifold  with nonpositive sectional curvature. Let $u : M \rightarrow N$ be a   $p$-harmonic map. If the image  $u(M)$ lies in a compact subset of $N$, then $u$ is a constant map.
\end{corollary}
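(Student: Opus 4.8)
The plan is to apply the Liouville theorem (Theorem \ref{Liouville-thm}) directly, so the entire task reduces to exhibiting a single positive $C^2$ function $\phi$ on (a neighborhood of) the image $u(M)$ that satisfies the three hypotheses of that theorem. Since $(N^n,h)$ has nonpositive sectional curvature we may take $A=0$; consequently the first hypothesis $A\max_{u(M)}\phi<\infty$ holds automatically for any choice of $\phi$, and only the gradient bound $|\nabla\phi|\le c_0$ and the Hessian inequality $\frac{\alpha}{1+\phi}\,d\phi\otimes d\phi-\mbox{Hess}(\phi)\ge\beta h$ remain to be verified.

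First I would use the compactness of the image: since $u(M)$ lies in a compact subset of $N$, it is contained in a closed geodesic ball $\overline{B_q(\rho_0)}$ for some $q\in N$ and $\rho_0>0$. Let $\rho=d(q,\cdot)$ denote the distance from $q$ and set
\[
\phi=\rho_0^2+1-\rho^2 .
\]
Then $\phi\ge 1>0$ on $\overline{B_q(\rho_0)}\supseteq u(M)$, so $\phi$ is positive there, and $|\nabla\phi|=2\rho|\nabla\rho|=2\rho\le 2\rho_0=:c_0$, which gives the required gradient bound.

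The heart of the matter is the Hessian inequality, for which I would invoke the Hessian comparison theorem. Because $\mbox{Sec}^N\le 0$, comparison with the Euclidean model yields $\mbox{Hess}(\rho)\ge \frac{1}{\rho}\bigl(h-d\rho\otimes d\rho\bigr)$, and hence
\[
\mbox{Hess}(\rho^2)=2\,d\rho\otimes d\rho+2\rho\,\mbox{Hess}(\rho)\ge 2h .
\]
Therefore $-\mbox{Hess}(\phi)=\mbox{Hess}(\rho^2)\ge 2h$, and since $\frac{\alpha}{1+\phi}\,d\phi\otimes d\phi\ge 0$ as a quadratic form, we obtain
\[
\frac{\alpha}{1+\phi}\,d\phi\otimes d\phi-\mbox{Hess}(\phi)\ge 2h
\]
for every $\alpha>0$; thus the Hessian hypothesis holds with $\beta=2$ (and, say, $\alpha=1$). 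With all three hypotheses verified, Theorem \ref{Liouville-thm} forces $du\equiv 0$, i.e.\ $u$ is constant.

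The main obstacle is a regularity/geometry point hidden in the above: the squared distance $\rho^2$ and the comparison estimate are only guaranteed to be smooth and valid away from the cut locus of $q$, that is, on a geodesic ball free of cut points. This is automatic when $N$ is a Cartan--Hadamard manifold (no cut locus), and more generally on any ball contained within the convexity radius. To treat a general complete nonpositively curved $N$ one should therefore choose $q$ and $\rho_0$ so that $\overline{B_q(\rho_0)}$ is a strongly convex ball containing $u(M)$, on which $\phi$ is genuinely $C^2$ and the comparison estimate applies; confirming that the compact image can be enclosed in such a ball is the only delicate part of the argument, after which the construction and the application of Theorem \ref{Liouville-thm} are routine.
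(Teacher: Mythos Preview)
Your proposal is correct and follows essentially the same route as the paper: both construct $\phi$ as a constant minus $\rho^2$ for the distance $\rho$ from a fixed point, bound $|\nabla\phi|$ by $2\rho$ on the compact image, use Hessian comparison (with $\mbox{Sec}^N\le 0$) to get $-\mbox{Hess}(\phi)=\mbox{Hess}(\rho^2)\ge 2h$, and then invoke Theorem~\ref{Liouville-thm}. The only cosmetic differences are that the paper takes the base point to be $u(x_0)$ and the additive constant to be any $a>\sup_{M}\rho^2(u)$, and it applies the Hessian comparison without pausing over the cut-locus issue you flag.
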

\begin{proof}
For any $x_0 \in M$,
let   $\rho$ denote the distance function on $N$ from $u(x_0)$ and $\phi=a-\rho^2$,  where
$a > \sup_{x\in  M}\rho^2 (u(x))$.
A direct computation shows that, on the image $u(M)$,
\begin{align*}
|\nabla \phi|=|d \rho^2| =2\rho|d \rho|  \leq 2\sqrt{a}.
\end{align*}
By  the composition formula (cf. \cite{EL}) and the standard Hessian comparison theorem, we have
\begin{align*}
- \mbox{Hess} (\phi) =\mbox{Hess}(\rho^2)=& 2\rho \mbox{Hess}(\rho) + 2 d\rho\otimes d \rho\\
\geq& 2(h-d\rho\otimes d \rho ) +2 d\rho\otimes d \rho\\
=&2h.
\end{align*}
Therefore, Theorem \ref{Liouville-thm} immediately gives the proof.
\end{proof}

If the sectional curvature of the target manifold is bounded from above by a positive constant, The corresponding Liouville property can be stated as follows.
\begin{corollary}
Suppose $(M^m, g)$ is a complete Riemannian manifold with nonnegative Ricci curvature and $(N^n, h)$ is a complete Riemannian manifold with $\mbox{Sec}^N \leq A$, $A>0$. Let $u : M \rightarrow N$ be a  $p$-harmonic map, such that $u(M) \subset B_R(y_0)$. If   $B_R(y_0)$ lies inside the cut locus of $y_0$ and $R < \frac{\pi}{2\sqrt{A}}$. Then $u$ is a constant map.
\end{corollary}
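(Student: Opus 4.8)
The plan is to reduce this to Theorem~\ref{Liouville-thm} by producing an explicit positive $C^2$ function $\phi$ on the geodesic ball $B_R(y_0) \supset u(M)$ that satisfies the three hypotheses of that theorem. Guided by the nonpositively curved case, where the quadratic $a - \rho^2$ works, I would replace it by a trigonometric comparison function adapted to the positive upper curvature bound. Concretely, letting $\rho$ denote the distance function on $N$ from $y_0$, I set
\[
\phi = \cos(\sqrt{A}\,\rho).
\]
Since $u(M) \subset B_R(y_0)$ and $R < \tfrac{\pi}{2\sqrt{A}}$, we have $0 \le \sqrt{A}\rho < \tfrac{\pi}{2}$ on the image, so $\phi \ge \cos(\sqrt{A}R) > 0$; moreover $\phi \le 1$, giving $A\max_{u(M)}\phi \le A < \infty$, while $|\nabla\phi| = \sqrt{A}\,|\sin(\sqrt{A}\rho)| \le \sqrt{A}$, so one may take $c_0 = \sqrt{A}$.

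The core of the argument is the Hessian estimate. Writing $\phi = \phi(\rho)$ with $\phi'(\rho) = -\sqrt{A}\sin(\sqrt{A}\rho)$ and $\phi''(\rho) = -A\cos(\sqrt{A}\rho)$, a direct computation for radial functions gives
\[
\mbox{Hess}(\phi) = \phi'(\rho)\,\mbox{Hess}(\rho) + \phi''(\rho)\, d\rho\otimes d\rho.
\]
Because $\mbox{Sec}^N \le A$ and $B_R(y_0)$ lies inside the cut locus of $y_0$, the Hessian comparison theorem yields $\mbox{Hess}(\rho) \ge \sqrt{A}\cot(\sqrt{A}\rho)\,(h - d\rho\otimes d\rho)$. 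Since $\phi'(\rho) < 0$ on the image, multiplying flips the inequality, and after collecting the $d\rho\otimes d\rho$ terms I expect to obtain exactly
\[
\mbox{Hess}(\phi) \le -A\cos(\sqrt{A}\rho)\,h = -A\,\phi\, h.
\]
Hence $-\mbox{Hess}(\phi) \ge A\phi\,h \ge A\cos(\sqrt{A}R)\,h$ on $u(M)$. As the term $\frac{\alpha}{1+\phi}d\phi\otimes d\phi$ is nonnegative, the required inequality $\frac{\alpha}{1+\phi}d\phi\otimes d\phi - \mbox{Hess}(\phi) \ge \beta h$ then holds for any $\alpha > 0$ with $\beta = A\cos(\sqrt{A}R) > 0$; the positivity of $\beta$ is precisely where the radius restriction $R < \tfrac{\pi}{2\sqrt{A}}$ enters. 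Theorem~\ref{Liouville-thm} then forces $u$ to be constant.

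The one point requiring care, rather than a genuine obstacle, is the regularity of $\phi$ at $y_0$ itself, should $y_0 \in u(M)$: the distance $\rho$ is only Lipschitz there. I would resolve this by observing that $\cos(\sqrt{A}\rho)$ is an even smooth function of $\rho$, hence a smooth function of $\rho^2$, and $\rho^2$ is smooth throughout the region inside the cut locus; thus $\phi$ is genuinely $C^2$ (indeed smooth) on all of $B_R(y_0)$. With this understood, every hypothesis of Theorem~\ref{Liouville-thm} is verified and the corollary follows at once.
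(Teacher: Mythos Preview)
Your proposal is correct and follows essentially the same approach as the paper: both construct $\phi=\cos(\sqrt{A}\,\rho)$, verify positivity and the gradient bound, and use Hessian comparison to obtain $-\mbox{Hess}(\phi)\ge A\cos(\sqrt{A}R)\,h$, then invoke Theorem~\ref{Liouville-thm}. The only cosmetic difference is that you take $\rho$ to be the distance from the fixed center $y_0$ rather than from $u(x_0)$, which in fact makes the range condition $0<\cos(\sqrt{A}\rho)\le1$ more transparent; your added remark on the $C^2$-regularity of $\phi$ at the center is a nice clarification the paper omits.
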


\begin{proof}
For any $x_0 \in M$,
let   $\rho$ denote the distance function on $N$ from $u(x_0)$ and let $\phi = \cos (\sqrt{A} \rho)  $.
Since $R < \frac{\pi}{2\sqrt{A}}$, we get
\begin{equation*}
0<\cos (\sqrt{A} \rho) \leq 1 \ \ \ \ \mbox{and}\ \ A \underset{u(M)}{\max}\cos (\sqrt{A}\rho)\leq A.
\end{equation*}
Then
\begin{align*}
|\nabla \phi|=\sqrt{A} |\sin(\sqrt{A} \rho)|| d \rho| \leq \sqrt{A}.
\end{align*}
By   the Hessian comparison theorem, we have
\begin{align*}
-\mbox{Hess}   \cos (\sqrt{A} \rho)=& \sqrt{A}\sin (\sqrt{A} \rho)\mbox{Hess}(\rho) + A\cos(\sqrt{A}\rho) d\rho\otimes d \rho\\
\geq& A\sin(\sqrt{A}\rho) \cot(\sqrt{A} \rho)(h-d\rho\otimes d \rho)+ A\cos (\sqrt{A} \rho ) d\rho\otimes d \rho\\
\geq & A\cos(\sqrt{A}R) h.
\end{align*}
Therefore, the proof is completed by Theorem \ref{Liouville-thm}.
\end{proof}

Suppose $N$ is a complete
Riemannian manifold of nonpositive curvature and let $\gamma: \mathbb{R}\rightarrow N$ be a unit speed
geodesic. The union of balls $B_\gamma = \bigcup_{t>0}B_{\gamma(t)}(t)$ is called the horoball with center $c(+\infty)$. For $y \in N$, the function $t\rightarrow t- d(y, \gamma(t))$ is bounded from above and
monotonically increasing. So we can define the Busemann function by
\begin{equation*}
B(y)=\underset{t\rightarrow +\infty}{\lim}(t- d(y, \gamma(t))).
\end{equation*}
 It is easy to see that $B(y)>0$ on $B_\gamma$. We also know that $B(y)$ is a concave $C^2$-function with
$|\nabla B| =1$.

When the target manifold $N$ satisfies $\mbox{Sec}^N \leq -A$, $A>0$, we have
\begin{corollary}
Suppose $(M^m, g)$ is a complete Riemannian manifold with nonnegative Ricci curvature and $(N^n, h)$ is a complete Riemannian manifold with $\mbox{Sec}^N \leq -A$, $A>0$. Let $u : M \rightarrow N$ be a  $p$-harmonic map, such that $u(M) \subset B_\gamma$, where $B_\gamma$ is some horoball in $N$ centered at $\gamma(+\infty)$ with respect to a unit speed geodesic $\gamma(t)$. Then $u$ is a constant map.
\end{corollary}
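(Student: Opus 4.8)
The plan is to produce a positive $C^2$ function $\phi$ on the horoball $B_\gamma\supset u(M)$ satisfying the hypotheses of Theorem \ref{Liouville-thm}, and then to invoke that theorem directly, exactly as in the two preceding corollaries; the only new ingredient is the choice of $\phi$, which here must be built from the Busemann function $B$ rather than from a distance function.

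First I would record the analytic input on $B$. By the stated properties $B$ is $C^2$, concave and satisfies $|\nabla B|=1$ on $B_\gamma$; moreover, since $\mbox{Sec}^N\leq-A$ with $A>0$, the Hessian comparison theorem applied to the distance functions $\rho_t=d(\cdot,\gamma(t))$ and passed to the limit $t\to+\infty$ (writing $B=\lim_t(t-\rho_t)$, so that $\mbox{Hess}\,B=-\lim_t\mbox{Hess}\,\rho_t$ and $\sqrt A\coth(\sqrt A\,\rho_t)\to\sqrt A$) yields the sharpened bound
\[
\mbox{Hess}\,B\ \leq\ -\sqrt A\,(h-dB\otimes dB)
\]
in the sense of quadratic forms. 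This is the quantitative replacement for mere concavity and is what encodes the strict negativity of the curvature; note in particular that along its own gradient $\mbox{Hess}\,B(\nabla B,\nabla B)=0$.

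Next I would set $\phi=e^{\sqrt A\,B}$ (equivalently, arrange $\log\phi=\sqrt A\,B$, the natural concave normalization). Positivity is immediate, and since the curvature upper bound $-A$ is negative, the factor $\tilde A=\max\{0,-A\}$ vanishes, so the condition $A\max_{u(M)}\phi<\infty$ holds automatically, as noted after Theorem \ref{thm-main}. For the Hessian condition I would substitute $d\phi\otimes d\phi=A\,e^{2\sqrt A B}\,dB\otimes dB$ and $\mbox{Hess}\,\phi=A\,e^{\sqrt A B}dB\otimes dB+\sqrt A\,e^{\sqrt A B}\mbox{Hess}\,B$ into (\ref{hess-ine}) and use the displayed bound on $\mbox{Hess}\,B$. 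On vectors orthogonal to $\nabla B$ the term $dB\otimes dB$ drops out and $-\mbox{Hess}\,\phi$ alone gives at least $A\,e^{\sqrt A B}h\geq A h$; in the $\nabla B$ direction the negative contribution $-\mbox{Hess}\,\phi(\nabla B,\nabla B)=-A e^{\sqrt A B}$ is absorbed by $\tfrac{\alpha}{1+\phi}d\phi\otimes d\phi$, since the latter contributes $\tfrac{\alpha A e^{2\sqrt A B}}{1+e^{\sqrt A B}}$ and $\tfrac{\alpha e^{\sqrt A B}}{1+e^{\sqrt A B}}\to\alpha$; a short check shows $e^{\sqrt A B}\big(\tfrac{\alpha e^{\sqrt A B}}{1+e^{\sqrt A B}}-1\big)\geq1$ on $\{B>0\}$ once $\alpha\geq4$. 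Thus (\ref{hess-ine}) holds with $\beta=A$ and $\alpha$ large, and Theorem \ref{Liouville-thm} would then force $u$ to be constant.

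The hard part will be the gradient bound $|\nabla\phi|\leq c_0$: for $\phi=e^{\sqrt A B}$ one has $|\nabla\phi|=\sqrt A\,e^{\sqrt A B}$, which is unbounded towards the center of the horoball where $B\to+\infty$. I expect this to be the genuine obstacle, and it is not an artifact of the particular profile: the orthogonal part of (\ref{hess-ine}) forces $|\nabla\phi|\gtrsim\beta/\sqrt A$ from below, while the degeneracy $\mbox{Hess}\,B(\nabla B,\nabla B)=0$ forces the $\nabla B$ direction to be carried by $d\phi\otimes d\phi$, which demands $|\nabla\phi|$ grow; these two requirements cannot be met simultaneously by any \emph{radial} profile of bounded gradient. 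I therefore anticipate that the real work lies in reconciling this, e.g.\ by applying the local estimate of Theorem \ref{thm-pr} on each ball $u(B_{x_0}(R))$, where $B$ and hence $\nabla\phi$ are bounded and $c_0=c_0(R)$ is admissible, and then tracking how the normalizing constant $C$ from (\ref{k-large}) (which grows with $c_0$) enters the final $L^k$ bound, so that the limit $R\to+\infty$ still yields $du\equiv0$. Controlling this interplay between $C$ and the growth of $\phi$ is, in my view, the crux of the argument.
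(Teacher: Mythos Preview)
Your approach diverges from the paper's in the choice of auxiliary function: the paper takes $\phi=B^2$ (after normalizing so that $B\geq 1$), not the exponential $\phi=e^{\sqrt{A}B}$. With $\phi=B^2$ the Hessian verification is a two-line computation:
\[
\frac{\alpha}{\phi}\,d\phi\otimes d\phi-\mbox{Hess}(\phi)
=(4\alpha-2)\,dB\otimes dB-2B\,\mbox{Hess}(B)
\geq (4\alpha-2-2\sqrt{A})\,dB\otimes dB+2\sqrt{A}\,h,
\]
which is $\geq 2\sqrt{A}\,h$ once $4\alpha-2\geq 2\sqrt{A}$; the paper then simply records $|\nabla\phi|=2B$ and invokes Theorem~\ref{Liouville-thm}. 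There is no local-to-global tracking of the constant $C$ as you anticipate; the paper treats the gradient bound as satisfied and applies the Liouville theorem directly.

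Your exponential choice is what makes the gradient obstacle look severe: for $\phi=e^{\sqrt{A}B}$ one has $|\nabla\phi|=\sqrt{A}\,\phi$, whereas for the paper's $\phi=B^2$ one has $|\nabla\phi|=2\sqrt{\phi}$. The latter scaling is exactly what enters the key estimate (2.7) in the proof of Theorem~\ref{thm-pr}, where the error term is $|\nabla\phi|\cdot|\nabla f|$ and is absorbed using $\sqrt{\phi}\leq\phi/C$. So the quadratic profile is not merely simpler but structurally matched to how $|\nabla\phi|$ is actually used in the gradient estimate; your argument that ``no radial profile of bounded gradient'' can work is correct for a literally bounded $c_0$, but the paper's choice gives $|\nabla\phi|/\sqrt{\phi}$ bounded, which is the effective requirement. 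The elaborate program you sketch (letting $c_0=c_0(R)$ and chasing its dependence through (\ref{k-large})) is therefore unnecessary once you pick $\phi=B^2$.
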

\begin{proof}
Let $\phi(y) = B^2(y)$. We may assume that $B(y) \geq 1$ since Busemann functions are horofunctions. Then,
\begin{align*}
|\nabla \phi|=2 B|\nabla B|   \leq 2 B,
\end{align*}
which is bounded from above. Applying the Hessian comparison theorem, we have
\begin{align*}
-B_{\alpha \beta}=\underset{t\rightarrow \infty}{\lim}\rho_{\alpha \beta}(y, \gamma(t))  \geq& \sqrt{A}\coth(\sqrt{A}\rho(y, \gamma(t)))(h_{\alpha \beta} - dB_\alpha \otimes dB_\beta )\\
\geq& \sqrt{A}(h_{\alpha \beta} - dB_\alpha \otimes dB_\beta).
\end{align*}
Set $\alpha$ such that $4\alpha-2-2\sqrt{A} \geq 0$. Then
\begin{align*}
\frac{\alpha}{\phi}d \phi \otimes d\phi- Hess (\phi)  =& 4\alpha dB \otimes dB - 2B \mbox{Hess}(B) -2dB \otimes dB\\
\geq & (4\alpha-2)dB \otimes dB + 2 \sqrt{A}(h - dB \otimes dB)\\
\geq & 2 \sqrt{A}h.
\end{align*}
Therefore Theorem \ref{Liouville-thm} implies the proof.
\end{proof}

%\begin{corollary}
%Suppose $M$ is a complete Riemannian manifold with nonnegative Ricci curvature and $N$ is a simply-connected complete Riemannian manifold with $\mbox{Sec}^N \leq -A$, $A>0$.
 %Let $u : M \rightarrow N$ be a  $p$-harmonic map, such that $u(M) \subset B_\gamma$,
 %where $B_\gamma$ is some horoball in $N$ centered at $\gamma(+\infty)$ with respect to
 % a unit speed geodesic $\gamma(t)$. Then $u$ is a constant map.
%\end{corollary}

\begin{remark}
The negativity condition on the sectional curvature of the target manifold is necessary. For example, let
\begin{equation*}
u= (u_1, u_2):\mathbb{R}^2 \rightarrow \mathbb{R}^2.
\end{equation*}
If $u_1 =\mbox{constant}$ and $u_2$ be any nonconstant $p$-harmonic function, then $u$ is a nonconstant $p$-harmonic map.
In Euclidean space, the level sets of the Busemann function associated with a given geodesic are the planes perpendicular to the given geodesic. Hence, $u(\mathbb{R}^2)$ lies in a horoball.
\end{remark}

\vspace{0.5cm}
$\mathbf{Acknowledgements}$.  The authors would like to thank Prof. Guofang Wang for his interest and suggestions.

Yuxin Dong, School of Mathematical Sciences and
Laboratory of Mathematics for Nonlinear Science,
Fudan University, Shanghai, 200433, China.

E-mail address: yxdong@fudan.edu.cn\\

Hezi Lin, College of Mathematics and Informatics \&  FJKLMAA,
 Fujian Normal University, Fuzhou,  350108,
 China.

E-mail address: lhz1@fjnu.edu.cn\\

%\end{CJK*}
\end{document}